\documentclass[preprint]{imsart}

\pdfoutput=1
\RequirePackage[OT1]{fontenc}
\usepackage{amsthm, amsmath, natbib, amssymb, enumitem}
\usepackage{empheq}
\RequirePackage[colorlinks,citecolor=blue,urlcolor=blue]{hyperref}
\usepackage{pgfplots}
\pgfplotsset{compat=1.18}
\usepackage{mathtools} 
\mathtoolsset{showonlyrefs}
\usepackage[table]{xcolor}
\usepackage{array}

\startlocaldefs
\numberwithin{equation}{section}
\theoremstyle{plain}
\newtheorem{theorem}{Theorem}[section]
\newtheorem{lemma}{Lemma}[section]

\theoremstyle{remark}

\newcommand{\JSJS}{{ \mathrm{JS} }}

\newcommand{\EE}{\mathrm{E}}
\newcommand{\rd}{\mathrm{d}}

\endlocaldefs
\usepackage[margin=35truemm]{geometry}
\usepackage{booktabs}
\begin{document}
\begin{frontmatter}
\title{Improving the James--Stein estimator via finite-sum truncation of its positive-part}
\runtitle{A New James--Stein Estimator}

\begin{aug}
\author{\fnms{Yuzo} \snm{Maruyama}\thanksref{addr1,t1}\ead[label=e1]{maruyama@math.s.chiba-u.ac.jp}}
\and
\author{\fnms{Akimichi} \snm{Takemura}\thanksref{addr2,t2}\ead[label=e2]{a-takemura@biwako.shiga-u.ac.jp}}

\runauthor{Y.~Maruyama and A.~Takemura}

\address[addr1]{Chiba University \printead{e1}}
\address[addr2]{Shiga University \printead{e2}}

\thankstext{t1}{supported by JSPS KAKENHI Grant Number 22K11933}
\thankstext{t2}{supported by JSPS KAKENHI Grant Number 24K14852}
\end{aug}

\begin{abstract}
For estimating the mean vector of a $p$-variate normal distribution ($p \ge 3$) under quadratic loss, the positive-part James--Stein estimator dominates the original James--Stein estimator, but it possesses a non-smooth thresholding boundary. 
In this paper, by truncating the infinite series representation of the positive-part function to a finite sum of degree $m$, 
we propose a new class of smooth shrinkage estimators. 
We prove that the proposed estimator dominates the James--Stein estimator for any dimension $p \ge 3$, provided the truncation degree satisfies $m \ge 0.95\sqrt{p-2}$. 
\end{abstract}

\begin{keyword}[class=MSC]
\kwd[Primary ]{62C20}
\end{keyword}

\begin{keyword}
\kwd{minimaxity}
\kwd{James--Stein estimator}
\end{keyword}

\end{frontmatter}

\section{Introduction and a main result}
\label{sec:intro}
Let $ X$ have a $p$-variate normal distribution 
$ \mathcal{N}_{p} (\theta, I_{p}) $. 
We consider the problem of estimating the mean vector $\theta$ under 
the quadratic loss function $\| \hat{\theta} - \theta \|^2$.
Then the risk function of an estimator $ \hat{\theta}(X)$ is
$ R(\theta,\hat{\theta})=\EE_\theta\bigl[ \| \hat{\theta}(X) - \theta \|^2\bigr]$.

The usual unbiased estimator, $ X $, has constant risk $p$ and is
minimax for $p\in\mathbb{N}$. 
\cite{Stein-1956} showed that, for $p\geq 3$,
there exist estimators dominating the usual estimator $X$.
\cite{James-Stein-1961}
found an explicit dominating procedure
\begin{equation}\label{JS}
\hat{\theta}_{\JSJS}(X)=\left( 1- \frac{p-2}{\| X \|^2}\right)X,
\end{equation}  
called the James--Stein estimator \citep{Efron-2024}. 
Further, as shown in \cite{Baranchik-1964}, the James--Stein estimator is inadmissible,
as the positive-part estimator
\begin{equation}\label{JSPP}
\hat{\theta}_{\JSJS}^{+}(X)=\max\left(0, 1- \frac{p-2}{\| X \|^2}\right)X
\end{equation}  
dominates $\hat{\theta}_{\JSJS}$.
The James--Stein positive-part estimator 
can be written as 
\begin{equation}
\hat{\theta}_{\JSJS}^{+}(X)= \left(1 - \frac{\phi_+(\|X\|^2)}{\|X\|^2}\right)X,
\end{equation}
where
\begin{equation}
 \phi_+(w) = \min(w, p-2) = (p-2)\left(1 - \frac{1}{\sum_{i=0}^\infty \{w/(p-2)\}^i}\right).\label{jspp.infinite}
\end{equation}

In this paper, by replacing the infinite sum in \eqref{jspp.infinite} with a finite sum, we consider the shrinkage estimator 
$\hat{\theta}_m(X)=\left(1 - \phi_m(\|X\|^2)/\|X\|^2\right)X$ based on
\begin{equation}\label{phi.m}
\phi_m(w) = (p-2)\left(1 - \frac{1}{\sum_{i=0}^m \{w/(p-2)\}^i}\right)
\end{equation}
and provide an $m \in \mathbb{N}$ such that it improves upon the James--Stein estimator.
By truncating the series at a finite $m$, $\phi_m(w)$ becomes a smooth function 
over the entire domain $w \ge 0$. 
Furthermore, for any finite $m$, we have
\begin{equation}
1 - \frac{\phi_m(w)}{w} = \frac{1}{1+\sum_{i=1}^m \{(p-2)/w\}^i} \in [0,1],
\end{equation}
which nicely avoids the need to take the positive part.
With the choice of $m \geq 0.95 \sqrt{p-2}$, 
the main theorem of this paper is stated as follows.

\begin{theorem}\label{thm:main}
Assume $p\geq 3$. Then the shrinkage estimator
\begin{equation}\label{main.form}
\begin{split}
\hat{\theta}_m(X)&=
 \Bigl(1 - \frac{p-2}{\|X\|^2}
\bigl\{1 - \frac{1}{\sum_{i=0}^m \{\|X\|^2/(p-2)\}^i}
\bigr\}
\Bigr)X \\ &=
\frac{1}{1+\sum_{i=1}^m \{(p-2)/\|X\|^2\}^i}X,
\end{split}
\end{equation}
dominates the James--Stein estimator if $m\geq 0.95\sqrt{p-2}$.
\end{theorem}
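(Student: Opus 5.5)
The plan is to compare the two estimators through Stein's unbiased risk estimate and reduce the question to a one-dimensional integral inequality for central chi-square variables. Write $W=\|X\|^2$, $c=p-2$, $t=W/c$ and $S(t)=\sum_{i=0}^m t^i$, so that $c-\phi_m(W)=c/S(t)$. For an estimator $(1-\phi(W)/W)X$ the unbiased risk estimate is $p+\phi(\phi-2c)/W-4\phi'(W)$. Subtracting the James--Stein value $p-c^2/W$ gives
\begin{equation*}
\Delta(W)=\frac{(c-\phi_m(W))^2}{W}-4\phi_m'(W).
\end{equation*}
Since $\phi_m'(W)=S'(t)/S(t)^2$, this becomes $\Delta(W)=\{c/t-4S'(t)\}/\{cS(t)^2\}\cdot c$. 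The numerator is positive near $t=0$, so $\Delta$ is not pointwise nonpositive, and I will have to average.

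\textbf{Reduction to a one-variable inequality.} The law of $W$ is a Poisson($\|\theta\|^2/2$) mixture of central $\chi^2_{n}$ with $n=p+2k$, $k\ge 0$. It therefore suffices to show $\EE[\Delta(\chi^2_n)]\le 0$ for every such $n$, with strict inequality for at least one $k$. For a fixed central density $f_n$, I will integrate the derivative term by parts, writing $-4\int\phi_m' f_n=-4\int (c-\phi_m)f_n'$. The boundary terms vanish because $n\ge 3$ and $c-\phi_m$ is bounded. Using $f_n'/f_n=(n-2)/(2w)-1/2$, this yields
\begin{equation*}
\EE[\Delta]=\EE\Bigl[\frac{c}{S(t)W}\Bigl(\frac{c}{S(t)}-2(n-2)+2W\Bigr)\Bigr],\qquad W\sim\chi^2_n .
\end{equation*}
The next step is to check that the worst case is $k=0$, i.e.\ $n=p$, $n-2=c$. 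I would do this by writing $n-2=c+2k$ and showing that the bracket, after the change of variable $W=(n-2)u$, becomes more negative in $k$. A fallback is to check that the $k$-derivative of the expectation is nonpositive using the recursion between $\chi^2_n$ and $\chi^2_{n+2}$ expectations. This leaves the single inequality
\begin{equation*}
\int_0^\infty \frac{1}{S(t)}\Bigl(\frac{1}{S(t)}-2+2t\Bigr)\frac{1}{t}\,t^{p/2-1}e^{-ct/2}\,\rd t\le 0 .
\end{equation*}
The bracket is negative for $t<1$ and roughly $2(t-1)$ for $t>1$. The factor $1/S$ decays like $t^{-m}$ for $t>1$ and stays of order $1/(m+1)$ near $t=1$.

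\textbf{Main obstacle: the quantitative estimate giving $m\ge 0.95\sqrt{c}$.} The density of $t$ concentrates around $1$ at scale $1/\sqrt{c}$. I would therefore set $t=1+s\sqrt{2/c}$ and $m=\alpha\sqrt{c}$. Then $S(t)\approx (e^{\alpha\sqrt2 s}-1)/(s\sqrt{2/c})$, and the inequality becomes, to leading order, a Gaussian integral of the form
\begin{equation*}
\int \frac{s\sqrt2}{e^{\alpha\sqrt2 s}-1}\Bigl(s+\text{lower order}\Bigr)e^{-s^2/2}\,\rd s\le 0,
\end{equation*}
with the lower-order terms coming from $1/S$ and from the skewness of the chi-square. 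The sign of this integral is controlled by $\alpha$, and the threshold $0.95$ should be the (numerically computed) root, rounded up.

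To make this rigorous for every $p\ge 3$ rather than only asymptotically, I would split the integral at $t=1$. On $t<1$ I would use the bound $S(t)\le m+1$. On $t>1$ I would bound $1/S(t)$ by something like $(t-1)/(t^{m+1}-1)$ together with $t^{m+1}\ge e^{(m+1)\log t}$. I would then use the explicit gamma-type density to obtain a closed-form sufficient condition in $(\alpha,c)$, and verify small $p$ directly by numerical evaluation. I also need to show that $\EE[\Delta]$ is monotone decreasing in $m$, which I expect holds because increasing $m$ only lowers $1/S$ on $t>1$. I expect this non-asymptotic bookkeeping, which keeps the constant at $0.95$ uniformly in $p$, to be the hardest part of the argument.
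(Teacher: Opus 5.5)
There are two genuine gaps.

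\textbf{The reduction to $k=0$.} Write $\EE_k$ for expectation under $W\sim\chi^2_{p+2k}$. The monotonicity you propose to prove, that $\EE_k[\Delta(W)]$ becomes more negative in $k$ (via the bracket or via a $k$-derivative), is false. As $k\to\infty$ we have $t=W/c\to\infty$ and $S(t)\sim t^m$, so $\EE_k[\Delta(W)]\to 0$. A nonincreasing sequence with limit $0$ cannot start at the strictly negative value $\EE_0[\Delta(W)]=R(0,\hat\theta_m)-R(0,\hat\theta_{\JSJS})$ that you need. The pointwise route fails too: for fixed $u>1$ your bracket $c/S(t)+2(n-2)(u-1)$ tends to $+\infty$ with $k$. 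Only sign preservation holds. The idea that gives it is already in your first display: $\Delta(w)=\{1-4\Psi_m(w)\}h(w)$, where $h(w)=\{c-\phi_m(w)\}^2/w>0$ and $\Psi_m(w)=w\phi_m'(w)/\{c-\phi_m(w)\}^2=c^{-1}\sum_{i=1}^m i(w/c)^i$ is increasing. Since $f_{p+2k}(w)/f_p(w)\propto w^k$ is increasing (the paper uses the equivalent monotone likelihood ratio of $f_p(w;\lambda)/f_p(w;0)$), the covariance inequality under the probability weight proportional to $h(w)f_p(w)$ gives $\EE_k[\Delta(W)]\le\{\EE_k[h(W)]/\EE_0[h(W)]\}\,\EE_0[\Delta(W)]$. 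This reduces everything to $\theta=0$. It must be applied to the unbiased-risk-estimate form, not after your integration by parts, because the resulting bracket depends on $n$ and no longer factors. Your $k=0$ inequality itself is correct: it is exactly $\EE[g(U;m)]\ge 0$ with $g(u;m)=(1-2u^{m+1})/S(u)^2$ and $U\sim\mathrm{Ga}(q,1/q)$, as in the paper.

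\textbf{The quantitative step.} This is the real content of the theorem, it is not carried out, and the ingredients you sketch would not give $0.95$.

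First, your bracket is $1/S-2+2t=(1-t)(2t^{m+1}-1)/(1-t^{m+1})$. This is \emph{positive} on $(u_0,1)$, where $u_0=2^{-1/(m+1)}=1-O(c^{-1/2})$, i.e.\ inside the window where the density concentrates. So $t<1$ is not a favourable region, and splitting at $t=1$ misplaces the sign change. The paper splits at $u_1<u_0<1<1+6/\sqrt q$ and bounds the unfavourable piece $I_2$ separately.

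Second, $1/S\approx s\sqrt{2/c}/(e^{as}-1)$ with $a=\alpha\sqrt2$ is of the same order as $2(t-1)$, not lower order. Keeping only $2(t-1)$, your displayed integral is a negative constant independent of $\alpha$: pair $s$ with $-s$ and use $(e^{as}-1)^{-1}+(e^{-as}-1)^{-1}=-1$. So the threshold comes entirely from the term you drop. The correct limit is proportional to $\int s^2(2e^{as}-1)(e^{as}-1)^{-2}e^{-s^2/2}\,\rd s$, whose root is $a\approx1.3283$, i.e.\ $\alpha\approx0.9393$.

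Third, $0.95$ exceeds this root by only about one percent, so the bound $S\le m+1$ on $t<1$ is far too lossy. On $t<u_0$ it shrinks $1/S^2$ by the factor $(1-e^{as})^2/(as)^2$, about $0.3$ at $s=-1$. On $(u_0,1)$ it goes in the wrong direction.

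What is needed, and what the paper does, is the following:
\begin{enumerate}
\item Replace $(1+x/\sqrt q)^{m+1}$ by sharp $q$-free comparisons: $e^{c_1x}$ for $x<0$ and $(1+x/\sqrt{q_*})^{c_1\sqrt{q_*}}$ for $x>0$. These use the monotonicity of $\tilde g(y)=(1-2y)/(1-y)^2$ on $y<1/2$, on $1/2<y<1$ and on $y>1$.
\item Add Stirling bounds, the monotonicity in $q$ of $q\log(1+x/\sqrt q)-\sqrt q\,x$, and a Chernoff tail bound. Together these reduce all $q\ge 20000$ to one numerical inequality with margin $0.001$.
\item Check the remaining $q$ numerically at $m=\lceil c_1\sqrt q\,\rceil$ only. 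This suffices by the monotonicity in $m$ you anticipate, which follows at once from $R(0,\hat\theta_m)=\EE_0[(1-\phi_m(W)/W)^2W]$ with $1-\phi_m(w)/w$ decreasing in $m$.
\end{enumerate}
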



One of our motivations for proposing a new type of James--Stein estimator stems 
from the block James--Stein method known in the field of wavelet analysis
(See \cite{johnstone-2019-book, Candes-2006}).
The interpretation is that very low-frequency components are left untouched, 
intermediate-frequency components are shrunk toward zero, 
and high-frequency components are discarded. 
For the intermediate-frequency components, 
the non-negative part estimator given in \eqref{JSPP} is employed. 
However, within the intermediate-frequency range, 
sparsity in the sense that entire blocks become zero is not expected. 
Rather, it is more natural to estimate the components up to the intermediate-frequency 
region, just before the high-frequency components that are discarded, as non-zero. 
Therefore, 
our simple and tuning-free estimator, $\hat{\theta}_m(X)$ \eqref{main.form}, 
for $m=\lceil \sqrt{p-2} \;\rceil$, 
that is, the smallest integer greater than or equal to $\sqrt{p-2}$,
is better suited for use in the block James--Stein method.

\section{Proof}
\label{sec:proof}
\subsection{The risk difference at the origin}
For the shrinkage estimator $\hat{\theta}_{\phi}(X)= ( 1- \phi(\| X \|^2)/\| X \|^2)X$,
\cite{Stein-1974} expressed the risk as 
$ R(\theta,\hat{\theta}_\phi) =\EE_\theta\bigl[\hat{R}_\phi(\|X\|^2)\bigr]$,
where 
\begin{equation}\label{stein.identity.2}
 \hat{R}_\phi(w)=p+\frac{\phi(w)}{w}\left\{\phi(w)-2(p-2)\right\}-4\phi'(w).
\end{equation}
Then
the difference of the risks, $R(\theta,\hat{\theta}_{\JSJS})- R(\theta,\hat{\theta}_m)$, 
is 
\begin{equation}\label{risk.diff.1}
R(\theta,\hat{\theta}_{\JSJS})- R(\theta,\hat{\theta}_m)
=\EE_\theta\bigl[\hat{R}_{\JSJS}(\|X\|^2)-\hat{R}_m(\|X\|^2)\bigr]
\end{equation}
where
\begin{equation}\label{Phi.1}
\begin{split}
 \hat{R}_{\JSJS}(w)-\hat{R}_m(w)
&=4\phi'_m(w)-\frac{\{p-2-\phi_m(w)\}^2}{w} \\
&=\bigl(4\Psi_m(w)-1\bigr)\frac{\{p-2-\phi_m(w)\}^2}{w},
\end{split} 
\end{equation}
and where $\Psi_m(w)$ in \eqref{Phi.1} is given by
\begin{equation}
 \Psi_m(w) = \frac{w\phi'_m(w)}{\{p-2-\phi_m(w)\}^2}=\frac{1}{p-2}\sum_{i=1}^m i \Bigl(\frac{w}{p-2}\Bigr)^i.
\end{equation}
Note the distribution of $W=\|X\|^2$ follows a noncentral chi-square distribution 
with $p$ degrees of freedom and noncentrality parameter $\lambda = \|\theta\|^2$, 
whose probability density function is denoted by $f_p(w; \lambda)$.
The noncentral chi-square distribution possesses a monotone likelihood ratio; 
that is, $f_p(w; \lambda)/f_p(w; 0)$ is monotone increasing in $w$.
Then, following the method of \cite{Maruyama-Takemura-2025}, we have
\begin{align}
 R(\theta,\hat{\theta}_{\JSJS})- R(\theta,\hat{\theta}_m)
&=\int_0^\infty
\bigl(4\Psi_m(w)-1\bigr)\frac{\{p-2-\phi_m(w)\}^2}{w}f_p(w;\lambda)\rd w\label{difference.0}\\
&=\int_0^\infty
\bigl(4\Psi_m(w)-1\bigr)\frac{f_p(w;\lambda)}{f_p(w;0)}\frac{\{p-2-\phi_m(w)\}^2}{w}f_p(w;0)\rd w
\\
&\geq
\int_0^\infty
\bigl(4\Psi_m(w)-1\bigr)\frac{\{p-2-\phi_m(w)\}^2}{w}f_p(w;0)\rd w
 \\
&\quad\times
\frac{\int_0^\infty\{f_p(w;\lambda)/f_p(w;0)\}\{\{p-2-\phi_m(w)\}^2/w\}f_p(w;0)\rd w}
{\int_0^\infty\{\{p-2-\phi_m(w)\}^2/w\}f_p(w;0)\rd w}\\
&=\frac{\EE_\lambda[\{p-2-\phi_m(W)\}^2/W]}
{\EE_{\lambda=0}[\{p-2-\phi_m(W)\}^2/W]}\;\Delta(p;m),
\end{align}
where the inequality follows from the fact that $\Psi_m(w)$ is monotone increasing in $w$
and where
\begin{equation}\label{difference_origin}
\Delta(p;m)\coloneqq R(0,\hat{\theta}_{\JSJS})- R(0,\hat{\theta}_m).
\end{equation}
By \eqref{difference.0}, in order to prove Theorem \ref{thm:main}, it suffices to verify that 
\begin{equation}\label{Dpm0.94}
\Delta(p;m)\geq 0,\quad\text{for }p\geq 3\text{ and } m\geq 0.95\sqrt{p-2}.
\end{equation}

For $\Delta(p;m)$ given by \eqref{difference_origin}, we have
\begin{align}
 R(0,\hat{\theta}_{\JSJS})&=p-\EE_{\lambda=0}\Bigl[\frac{(p-2)^2}{W}\Bigr]=p-(p-2)=2,\\
R(0,\hat{\theta}_m)&= \EE_{\lambda=0}\Bigl[\Bigl(1 - \frac{\phi_m(W)}{W}\Bigr)^2W\Bigr],
\shortintertext{and hence}
\Delta(p;m)&=2 - \EE_{\lambda=0}\Bigl[\Bigl(1 - \frac{\phi_m(W)}{W}\Bigr)^2W\Bigr].\label{DELTApm}
\end{align}
Furthermore, by the definition of $\phi_m(w)$ given by \eqref{phi.m}, we have
\begin{equation}\label{mm1}
0<\frac{\phi_{m-1}(w)}{w} <\frac{\phi_{m}(w)}{w} <1
\end{equation}
for all $w\geq 0$ and hence 
\begin{equation}\label{R.mm1}
 R(0,\hat{\theta}_{m})<R(0,\hat{\theta}_{m-1}).
\end{equation}
For a given $p\geq 3$, the equations \eqref{difference_origin}, \eqref{mm1} and \eqref{R.mm1} 
imply that there exists $m(p)$ satisfying
\begin{equation}
\Delta(p;m(p)) \geq 0 > \Delta(p;m(p)-1).
\end{equation}
The lower bound $0.95\sqrt{p-2}$ for $m$ in \eqref{Dpm0.94} originates 
from a first-order approximation of $m(p)$ as $p \to \infty$.
Details are explained in Section \ref{sec:0.94}.

%

\subsection{Re-expression of the risk difference}
\label{sec:reex}
Expanding and taking the expectation for $ \Delta(p;m)$ given by \eqref{DELTApm}, we obtain
\begin{align}
\Delta(p;m)
&= 2 - \EE_{\lambda=0}[W] + 2\EE_{\lambda=0}[\phi_m(W)] - \EE_{\lambda=0}[\phi^2_m(W)/W]\\
&= 2 - p + 2(p-2)\EE_{\lambda=0}\left[1 - \frac{1}{\sum_{i=0}^m (W/\{p-2\})^i}\right]\\
&\quad - \EE_{\lambda=0}\left[\frac{(p-2)^2}{W}\left(1 - \frac{1}{\sum_{i=0}^m (W/\{p-2\})^i}\right)^2\right]\\
&= (p-2)\EE_{\lambda=0}\left[\frac{1}{W}\left(\frac{2\{(p-2)-W\}}{\sum_{i=0}^m (W/\{p-2\})^i}
- \frac{p-2}{\{\sum_{i=0}^m (W/\{p-2\})^i\}^2}\right)\right]\\
&= \EE\left[\frac{2(p-2-V)}{\sum_{i=0}^m (V/\{p-2\})^i}
- \frac{p-2}{\{\sum_{i=0}^m (V/\{p-2\})^i\}^2}\right]\\
&= (p-2)\EE\left[\frac{2(1-U)}{\sum_{i=0}^m U^i}
- \frac{1}{\{\sum_{i=0}^m U^i\}^2}\right]\\
&= (p-2)\EE[g(U;m)], \label{J}
\end{align}
where $V \sim \chi^2_{p-2}$, $U \sim \mathrm{Ga}(\{p-2\}/2, 2/\{p-2\})$, and
\begin{equation}\label{gu.0}
 g(u;m) = \frac{1 - 2u^{m+1}}{\{\sum_{i=0}^m u^i\}^2}.
\end{equation}
In what follows, we set $q = p/2 - 1$.
With the probability density function of $U \sim \mathrm{Ga}(q, 1/q)$,
\begin{equation}\label{fq}
 f_q(u) = \frac{q^q}{\Gamma(q)} u^{q-1} e^{-qu},
\end{equation}
$\Delta(2q+2;m)$ given by \eqref{J} is expressed as
\begin{equation}\label{Delta.q}
\Delta(2q+2;m) 
=2q\int_0^\infty g(u;m)f_q(u)\rd u.
\end{equation}

\subsection{Splitting the Region of Integration}
The sign of $g(u;m)$ given by \eqref{gu.0} changes from $+$ to $-$ at
\begin{equation}\label{u0}
 u_0 = (1/2)^{1/(m+1)}.
\end{equation}
Let
\begin{equation}\label{qc1m}
 q> 36, \ c_1=0.95\sqrt{2}, \text{ and } m\geq c_1\sqrt{q}.
\end{equation}
Then, using the inequality $e^x \geq 1 + x$ for $x \in \mathbb{R}$, this $u_0$ satisfies
\begin{equation}\label{uhen}
u_0 = \exp\left(-\frac{\log 2}{m+1}\right) \geq 1 - \frac{\log 2}{m+1} 
> 1 - \frac{\log 2}{c_1\sqrt{q}},
\end{equation}
where the second inequality follows from \eqref{qc1m}.
Below, we denote the right-hand side of \eqref{uhen} by
\begin{equation}\label{u1}
 u_1 = 1 - \frac{\log 2}{c_1\sqrt{q}} = 1 + \frac{c_2}{\sqrt{q}}, 
\text{ with } c_2 = -\frac{\log 2}{c_1}.
\end{equation}
From \eqref{u0}, \eqref{qc1m}, \eqref{uhen}, and \eqref{u1}, note that $0 < 1 - 6/\sqrt{q} < u_1 < u_0 < 1$.
Then, by \eqref{Delta.q}, we have
\begin{align}
\frac{\Delta(2q+2;m)}{2}
&= q\left(\int_0^{u_0} + \int_{u_0}^1 + \int_1^{1+6/\sqrt{q}} + \int_{1+6/\sqrt{q}}^\infty\right)g(u;m)f_q(u)\rd u\\
&\ge q\left(\int_{1-6/\sqrt{q}}^{u_1} + \int_{u_0}^1 + \int_1^{1+6/\sqrt{q}} + \int_{1+6/\sqrt{q}}^\infty \right)g(u;m)f_q(u)\rd u\\
&= q\int_{1-6/\sqrt{q}}^{u_1}g(u;m)f_q(u)\rd u
- q\int_{u_0}^1 |g(u;m)|f_q(u)\rd u\\ 
&\qquad - q\int_1^{1+6/\sqrt{q}} |g(u;m)|f_q(u)\rd u
- q\int_{1+6/\sqrt{q}}^\infty |g(u;m)|f_q(u)\rd u\\
&\coloneqq I_1(q;m) - I_2(q;m) - I_3(q;m) - I_4(q;m).
\end{align}
Note that the function $g(u;m)$ is given by $\eqref{gu.0}$ and
that $ g(1;m)=-1/(m+1)$. For $u\neq 1$, 
the function $g(u;m)$ can be written as
\begin{align}\label{tildegy}
g(u;m) &= \frac{(1-u)^2(1 - 2u^{m+1})}{(1-u^{m+1})^2} \\ 
&=(1-u)^2\tilde{g}(u^{m+1}), \quad\text{where}\quad \tilde{g}(y) = \frac{1 - 2y}{(1-y)^2}.
\end{align}
Thus, particularly for $I_1(q), I_2(q)$, and $I_3(q)$, 
applying the change of variables $u = 1 + x/\sqrt{q}$ yields
\begin{align}
 I_1(q;m) &= \frac{1}{q^{1/2}}\int_{-6}^{c_2}x^2 \tilde{g}((1+x/\sqrt{q})^{m+1})f_q(1+x/\sqrt{q})\rd x,\\
I_2(q;m) &= \frac{1}{q^{1/2}}\int_{\sqrt{q}(u_0-1)}^0 x^2 |\tilde{g}((1+x/\sqrt{q})^{m+1})|f_q(1+x/\sqrt{q})\rd x,\\
I_3(q;m) &= \frac{1}{q^{1/2}}\int_0^6 x^2 |\tilde{g}((1+x/\sqrt{q})^{m+1})|f_q(1+x/\sqrt{q})\rd x.
\end{align}
In Sections \ref{sec:I1I2I3} and \ref{sec:I4},
we will show that
\begin{equation}
 I_1(q;m)\geq I_1^*,\quad I_2(q;m)\leq I_2^*,\quad I_3(q;m)\leq I_3^*,\quad I_4(q;m)\leq I_4^*,
\end{equation}
for all  $ q\geq q_*>36$ and $m\geq c_1\sqrt{q}$, and where
\begin{align}
I_1^* &= \frac{\exp(-1/12q_*)}{\sqrt{2\pi}}\int_{-6}^{c_2}
x^2 \frac{1-2\exp(c_1 x)}{(1-\exp(c_1 x))^2}
\frac{(1+x/\sqrt{q_*})^{q_*}}{\exp( \sqrt{q_*}x)}
\rd x, \\
 I_2^* &=
\frac{1}{\sqrt{2\pi}}\int_{c_2}^0 x^2 
\frac{2\exp(c_1 x)-1}{(1-\exp(c_1 x))^2}
\exp\left(-\frac{x}{\sqrt{q_*}} - \frac{q_*-1}{q_*}\frac{x^2}{2}\right)\rd x, \\
 I_3^* &= \frac{1}{\sqrt{2\pi}}
\int_{0}^6 x^2 
\frac{2(1+x/\sqrt{q_*})^{c_1\sqrt{q_*}}-1}{((1+x/\sqrt{q_*})^{c_1\sqrt{q_*}}-1)^2}
\frac{(1+x/\sqrt{q_*})^{q_*}}{\exp( \sqrt{q_*}x)}
\rd x, \\
I_4^* &=
\frac{2}{c_1^2}\exp\left(-\frac{1}{2}\frac{(6-1/\sqrt{q_*})^2}{1+6/\sqrt{q_*}}\right).
\end{align}
In particular, we have
\begin{equation}\label{I1I2I3I4}
 I_1^* - I_2^* - I_3^* - I_4^* > 0.001 \ \text{for }q_* =20000
\end{equation}
and hence 
\begin{equation}
\Delta(2q+2;m)=R(0,\hat{\theta}_{\JSJS})- R(0,\hat{\theta}_m) \geq 0
\end{equation}
for $ q\geq q_*$ and $m\geq c_1\sqrt{q}$.
For small values of $q$, 
\begin{equation}\label{p*}
\Delta(2q+2;m) > 0 \ \text{ for }1/2\leq q < q_*\text{ and }m\geq c_1\sqrt{q},
\end{equation}
can be shown individually and numerically.
The R code for \eqref{I1I2I3I4} and \eqref{p*} is available at \url{https://github.com/yuzo-maruyama/quarto_stats/blob/main/finite_sum_JS.ipynb}.
Therefore we complete the proof of Theorem \ref{thm:main}.

\subsection{$I_1$, $I_2$ and $I_3$}
\label{sec:I1I2I3}
\subsubsection*{A lower and upper bound of $f_q(u)$}
Recall $f_q(u)$ is given by \eqref{fq}.
Then we have
\begin{equation}\label{fqfq}
 f_q(1+x/\sqrt{q}) = \frac{q^q e^{-q}}{\Gamma(q)} (1+x/\sqrt{q})^{q-1} e^{-\sqrt{q}x},
\end{equation}
and
\begin{equation}
\log\{ (1+x/\sqrt{q})^{q-1} e^{-\sqrt{q}x}\}
= (q-1)\log(1+x/\sqrt{q})-\sqrt{q}x = h(x;q) - \log(1+x/\sqrt{q}),
\end{equation}
where
\begin{equation}\label{hxq}
h(x;q) = q\log(1+x/\sqrt{q}) - \sqrt{q}x.
\end{equation}
By Part \ref{bunbo.2} of Lemma \ref{lemma:log}, for a fixed $x > 0$, $h(x;q)$ is monotonically decreasing in $q$.
Therefore, when $q \ge q_*$, we have
\begin{equation}\label{h}
\log\{ (1+x/\sqrt{q})^{q-1} e^{-\sqrt{q}x}\} \le h(x;q_*), \quad\text{for }x > 0,
\end{equation}
which is used in the evaluation of $I_3(q;m)$.

By Part \ref{bunbo.2} of Lemma \ref{lemma:log}, for a fixed $x < 0$, $h(x;q)$ is monotonically increasing in $q$.
Therefore, for $q \ge q_*$, we have
\begin{equation}\label{hh}
\log\{ (1+x/\sqrt{q})^{q-1} e^{-\sqrt{q}x}\} \ge h(x;q_*), \quad\text{for } x < 0,
\end{equation}
which is used in the evaluation of $I_1(q;m)$.

To bound $I_2(q;m)$ from above for a fixed $x < 0$, we use
\begin{align}\label{hhh}
\log\{ (1+x/\sqrt{q})^{q-1} e^{-\sqrt{q}x}\}
& < (q-1)\left(\frac{x}{\sqrt{q}} - \frac{x^2}{2q}\right) - \sqrt{q}x\\
&= -\frac{x}{\sqrt{q}} - \frac{q-1}{q}\frac{x^2}{2}\\
&\le -\frac{x}{\sqrt{q_*}} - \frac{q_*-1}{q_*}\frac{x^2}{2},
\end{align}
for $q \ge q_*$, 
where the first inequality in \eqref{hhh} follows from Part \ref{bunshi.1} of Lemma \ref{lemma:log}.

For the constant term in \eqref{fqfq}, Stirling's explicit bounds yield
\begin{equation}\label{Stirling}
\frac{\sqrt{q}}{\sqrt{2\pi}} \exp(-1/12q) \le \frac{q^q e^{-q}}{\Gamma(q)} \le 
\frac{\sqrt{q}}{\sqrt{2\pi}} \exp(-1/\{12q+1\}).
\end{equation}

\subsubsection{A lower bound of $I_1$}
In the integration interval $x \in (-6, c_2)$ of $I_1(q;m)$ 
(where $c_2 = -\log 2/c_1$ by \eqref{u1}), we have
\begin{equation}\label{futou.1}
 (1+x/\sqrt{q})^{m+1}
= \{(1+x/\sqrt{q})^{\sqrt{q}}\}^{(m+1)/\sqrt{q}}
\le \{e^x\}^{(m+1)/\sqrt{q}} \le \exp(c_1 x).
\end{equation}
The first inequality follows from the monotonicity of $(1+x/\sqrt{q})^{\sqrt{q}}$ as a function of $\sqrt{q}$, and the second inequality follows from $0 < e^x < 1$.
Furthermore, since $x \le -\log 2/c_1$,
\begin{equation}
 (1+x/\sqrt{q})^{m+1} \le \exp(c_1 x) \le \frac{1}{2}.
\end{equation}
Since $\tilde{g}(y)$ defined in \eqref{tildegy} is monotonically decreasing in $y$ for $y < 1/2$, the integrand of $I_1(q;m)$ satisfies
\begin{equation}
 \tilde{g}((1+x/\sqrt{q})^{m+1})
\ge \tilde{g}(\exp(c_1 x)) = \frac{1-2\exp(c_1 x)}{(1-\exp(c_1 x))^2},
\end{equation}
yielding the lower bound:
\begin{equation}\label{I1I1}
  I_1(q;m) \ge \frac{1}{q^{1/2}}\int_{-6}^{c_2}x^2 \frac{1-2\exp(c_1 x)}{(1-\exp(c_1 x))^2}
f_q(1+x/\sqrt{q})\rd x, 
\end{equation}
for all $q\geq q_*$ and $m\geq c_1\sqrt{q}$.
From \eqref{hh}, \eqref{Stirling}, and \eqref{I1I1}, we have
\begin{equation}
I_1(q;m) \ge \frac{\exp(-1/12q_*)}{\sqrt{2\pi}}\int_{-6}^{c_2}
x^2 \frac{1-2\exp(c_1 x)}{(1-\exp(c_1 x))^2}
\frac{(1+x/\sqrt{q_*})^{q_*}}{\exp( \sqrt{q_*}x)}
\rd x,
\end{equation}
for all $q\geq q_*$ and $m\geq c_1\sqrt{q}$.

\subsubsection{An upper bound of $I_2$}
In the integration interval $x \in (\sqrt{q}(u_0-1), 0)$ of $I_2(q;m)$, 
by definition \eqref{u0} of $u_0$, we have
\begin{equation}
\frac{1}{2} \le (1+x/\sqrt{q})^{m+1} \le 1.
\end{equation}
The function $|\tilde{g}(y)|$ is monotonically increasing in $y$ for $1/2 < y < 1$.
For $x \in (\sqrt{q}(u_0-1), 0)$, similarly to \eqref{futou.1}, we have
\begin{equation}\label{futou.1.5}
 (1+x/\sqrt{q})^{m+1}
= \{(1+x/\sqrt{q})^{\sqrt{q}}\}^{(m+1)/\sqrt{q}}
\le \{e^x\}^{(m+1)/\sqrt{q}} \le \exp(c_1 x),
\end{equation}
and therefore
\begin{equation}
| \tilde{g}((1+x/\sqrt{q})^{m+1})|
\le |\tilde{g}(\exp(c_1 x))|
= \frac{2\exp(c_1 x)-1}{(1-\exp(c_1 x))^2}.
\end{equation}
Regarding the lower endpoint $\sqrt{q}(u_0-1)$ of the integration interval, it follows from \eqref{uhen} and \eqref{u1} that $c_2 < \sqrt{q}(u_0-1)$.
Since $2\exp(c_1 x) - 1 \ge 0$ follows $x\in(c_2, 0)$, we can widen the integration range to $(c_2, 0)$ to obtain the upper bound:
\begin{equation}\label{I2I2}
 I_2(q;m) \le 
\frac{1}{q^{1/2}}\int_{c_2}^0 x^2 
\frac{2\exp(c_1 x)-1}{(1-\exp(c_1 x))^2}
f_q(1+x/\sqrt{q})\rd x,
\end{equation}
for all $q\geq q_*$ and $m\geq c_1\sqrt{q}$.
From \eqref{hhh}, \eqref{Stirling}, and \eqref{I2I2}, we have
\begin{equation}
 I_2(q;m) \le 
\frac{1}{\sqrt{2\pi}}\int_{c_2}^0 x^2 
\frac{2\exp(c_1 x)-1}{(1-\exp(c_1 x))^2}
\exp\left(-\frac{x}{\sqrt{q_*}} - \frac{q_*-1}{q_*}\frac{x^2}{2}\right)\rd x,
\end{equation}
for all $q\geq q_*$ and $m\geq c_1\sqrt{q}$.

\subsubsection{An upper bound of $I_3$}
In the integration interval $x \in (0, 6)$ of $I_3(q;m)$, for all $q \ge q_*$, we have
\begin{equation}
(1+x/\sqrt{q_*})^{c_1\sqrt{q_*}}
\le (1+x/\sqrt{q})^{c_1\sqrt{q}}
\le (1+x/\sqrt{q})^{m+1},
\end{equation}
where the first inequality follows from the monotonicity of $(1+x/\sqrt{q})^{\sqrt{q}}$ as a function of $\sqrt{q}$, and the second inequality follows from $1+x/\sqrt{q} \ge 1$.
Since $|\tilde{g}(y)|$ is monotonically decreasing in $y$ for $y > 1$,
\begin{equation}
| \tilde{g}((1+x/\sqrt{q})^{m+1})|
\le |\tilde{g}((1+x/\sqrt{q_*})^{c_1\sqrt{q_*}})|
= \frac{2(1+x/\sqrt{q_*})^{c_1\sqrt{q_*}}-1}{((1+x/\sqrt{q_*})^{c_1\sqrt{q_*}}-1)^2}.
\end{equation}
Therefore, we have
\begin{equation}\label{I3I3}
 I_3(q;m) \le \frac{1}{q^{1/2}}\int_{0}^6 x^2 
\frac{2(1+x/\sqrt{q_*})^{c_1\sqrt{q_*}}-1}{((1+x/\sqrt{q_*})^{c_1\sqrt{q_*}}-1)^2}
f_q(1+x/\sqrt{q})\rd x, 
\end{equation}
for all $q\geq q_*$ and $m\geq c_1\sqrt{q}$.
From \eqref{Stirling}, \eqref{I3I3}, and \eqref{h}, we have
\begin{equation}
 I_3(q;m) \le \frac{1}{\sqrt{2\pi}}
\int_{0}^6 x^2 
\frac{2(1+x/\sqrt{q_*})^{c_1\sqrt{q_*}}-1}{((1+x/\sqrt{q_*})^{c_1\sqrt{q_*}}-1)^2}
\frac{(1+x/\sqrt{q_*})^{q_*}}{\exp( \sqrt{q_*}x)}
\rd x,
\end{equation}
for all $q\geq q_*$ and $m\geq c_1\sqrt{q}$.

\subsection{An upper bound of $I_4$}
\label{sec:I4}
Recall 
\begin{equation}
I_4(q;m) = q \int_{1+6/\sqrt{q}}^\infty |g(u;m)|f_q(u)\rd u.
\end{equation}
In the denominator of the expression of $|g(u;m)|$ given by \eqref{gu.0},
\begin{equation}
 |g(u;m)| = \frac{2u^{m+1}-1}{\{\sum_{i=0}^m u^i\}^2},
\end{equation}
the AM-GM inequality gives
\begin{equation}
 \frac{\sum_{i=0}^m u^i}{m+1} \ge \left(\prod_{i=0}^m u^i\right)^{1/(m+1)} = u^{m/2}
\end{equation}
and hence
\begin{equation}
 |g(u;m)| \le \frac{2u^{m+1}}{\{(m+1) u^{m/2}\}^2} \le \frac{2u}{(m+1)^2} \le \frac{2}{c_1^2}\frac{u}{q}.
\end{equation}
Thus, we have
\begin{align}
I_4(q;m) &\le \frac{2}{c_1^2}\int_{1+6/\sqrt{q}}^\infty u f_q(u)\rd u\\
&= \frac{2}{c_1^2}\int_{1+6/\sqrt{q}}^\infty u \frac{q^q}{\Gamma(q)}u^{q-1}e^{-qu}\rd u\\
&= \frac{2}{c_1^2}\int_{1+6/\sqrt{q}}^\infty \frac{q^{q+1}}{\Gamma(q+1)}u^{q}e^{-qu}\rd u\\
&= \frac{2}{c_1^2}\mathrm{Pr}(\tilde{U} \ge 1+6/\sqrt{q}),
\end{align}
where $\tilde{U} \sim \mathrm{Ga}(q+1, 1/q)$.
Chernoff's concentration inequality for the gamma distribution 
$Y\sim\mathrm{Ga}(\alpha, \beta)$ states that for any $\delta > 0$,
\begin{equation}
 \mathrm{Pr}(Y \ge (1+\delta)\alpha\beta) \le \exp\left(-\alpha\{\delta - \log(1+\delta)\}\right).
\end{equation}
Applying the inequality from Part \ref{bunshi.1} of Lemma \ref{lemma:log} to $\log(1+\delta)$,
\begin{equation}
 \log(1+\delta) \le \delta\frac{1+\delta/2}{1+\delta} = \delta - \frac{\delta^2}{2(1+\delta)},
\end{equation}
we obtain
\begin{equation}
 \mathrm{Pr}(Y \ge (1+\delta)\alpha\beta) \le \exp\left(-\frac{\alpha\delta^2}{2(1+\delta)}\right).
\end{equation}
Substituting $\alpha = q+1$, $\beta = 1/q$, and $\displaystyle \delta = \frac{6/\sqrt{q}-1/q}{1+1/q}$, we get
\begin{equation}
 \mathrm{Pr}(\tilde{U} \ge 1+6/\sqrt{q}) \le
\exp\left(-\frac{1}{2}\frac{(6-1/\sqrt{q})^2}{1+6/\sqrt{q}}\right).
\end{equation}
For all $q\geq q_*$ and $m\geq c_1\sqrt{q}$, we have
\begin{equation}
I_4(q;m) < \frac{2}{c_1^2}\mathrm{Pr}(\tilde{U} \ge 1+6/\sqrt{q}) \le
\frac{2}{c_1^2}\exp\left(-\frac{1}{2}\frac{(6-1/\sqrt{q_*})^2}{1+6/\sqrt{q_*}}\right).
\end{equation}

\subsection{The lower bound of $m$}
\label{sec:0.94}
Let $a$ be a positive real number. 
For $\Delta(2q+2;m)$, 
let $m$ be the ceiling of $a\sqrt{q}$; that is, 
$m\in\mathbb{N}$ satisfies $a\sqrt{q}\leq m<a\sqrt{q}+1$.
Then, by the calculation in previous sections,
$\lim_{q\to\infty}\Delta(2q+2;m)$ is given by
\begin{equation}
\lim_{q\to\infty}\Delta(2q+2;m)= \int_{-\infty}^\infty\frac{1-2e^{ax}}{(e^{ax}-1)^2}\frac{1}{\sqrt{2\pi}}e^{-x^2/2}\rd x.
\end{equation}
The value of $a$ that yields $\lim_{q\to\infty}\Delta(2q+2;m)=0$ is numerically found to be $1.3283\approx 0.9393\sqrt{2}$. 
The constant $0.95$ is slightly larger than the approximated value $0.9393$.

\subsection{Lemma}
\begin{lemma}\label{lemma:log}
\begin{enumerate}
 \item\label{bunshi.1} For $x > -1$,
\begin{empheq}[left={\log(1+x)\empheqlbrace}]{alignat=2}
  & < \frac{x(x+2)}{2(x+1)} &\qquad &\text{if $x > 0$,} \label{em1}\\
  & > \frac{x(x+2)}{2(x+1)} &       &\text{if $-1 < x < 0$}. \label{em2}
\end{empheq}

\item\label{bunbo.1} 
For $x > -1$,
\begin{empheq}[left={\log(1+x)\empheqlbrace}]{alignat=2}
  & > x - \frac{x^2}{2} &\qquad &\text{if $x > 0$,} \label{em3}\\
  & < x - \frac{x^2}{2} &       &\text{if $-1 < x < 0$.} \label{em4}
\end{empheq}

\item \label{bunbo.2}  
\begin{enumerate}
\item For fixed $t > 0$, $(1+t/\sqrt{q})^{q}e^{-t\sqrt{q}}$ 
is monotonically non-increasing in $q$. 
\item For fixed $-\sqrt{q} < t < 0$, $(1+t/\sqrt{q})^{q}e^{-t\sqrt{q}}$ 
is monotonically non-decreasing in $q$. 
\end{enumerate}
\end{enumerate}
\end{lemma}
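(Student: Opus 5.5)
Parts \ref{bunshi.1} and \ref{bunbo.1} reduce to sign analysis of an auxiliary function that vanishes at the origin. For Part \ref{bunshi.1}, set
\[
r(x)=\frac{x(x+2)}{2(x+1)}-\log(1+x), \qquad x>-1 .
\]
Writing $\frac{x(x+2)}{2(x+1)}=\frac{1}{2}\bigl((x+1)-\frac{1}{x+1}\bigr)$ gives
\[
r'(x)=\frac12\Bigl(1+\frac{1}{(1+x)^2}\Bigr)-\frac{1}{1+x}=\frac{x^2}{2(1+x)^2}\ge 0 ,
\]
with equality only at $x=0$. Hence $r$ is strictly increasing and $r(0)=0$. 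It follows that $r>0$ for $x>0$ and $r<0$ for $-1<x<0$, which is exactly \eqref{em1}--\eqref{em2}.

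For Part \ref{bunbo.1}, set $s(x)=\log(1+x)-x+x^2/2$. Then
\[
s'(x)=\frac{1}{1+x}-1+x=\frac{x^2}{1+x}\ge 0 \quad\text{on } (-1,\infty).
\]
So $s$ is strictly increasing with $s(0)=0$, which gives \eqref{em3} for $x>0$ and \eqref{em4} for $-1<x<0$.

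For Part \ref{bunbo.2}, I will work with the logarithm $h(t;q)=q\log(1+t/\sqrt q)-t\sqrt q$ from \eqref{hxq} and differentiate in $q$ for fixed $t$. It is cleaner to substitute $y=t/\sqrt q>-1$. Then $\partial y/\partial q=-y/(2q)$, and a short computation gives
\[
\frac{\partial h}{\partial q}=\log(1+y)-\frac{y}{2(1+y)}-\frac{y}{2}
=\log(1+y)-\frac{y(y+2)}{2(1+y)} = -r(y).
\]
As $q$ varies with $t$ fixed, $y$ keeps the sign of $t$, and the constraint $t>-\sqrt q$ is exactly $y>-1$.
\begin{itemize}
\item If $t>0$, then $y>0$, so Part \ref{bunshi.1} gives $\partial h/\partial q=-r(y)<0$ and $h$ is decreasing in $q$. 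This is (a).
\item If $-\sqrt q<t<0$, then $-1<y<0$, so $\partial h/\partial q>0$ and $h$ is increasing in $q$. This is (b).
\end{itemize}
Exponentiating preserves monotonicity.

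The only real obstacle is the chain-rule computation of $\partial h/\partial q$, which must land exactly on the expression in Part \ref{bunshi.1}. Directly, with $t$ fixed,
\[
\frac{\partial}{\partial q}\Bigl[q\log\bigl(1+t q^{-1/2}\bigr)\Bigr]=\log(1+y)-\frac{y}{2(1+y)},
\qquad
\frac{\partial}{\partial q}\bigl[t\sqrt q\bigr]=\frac{y}{2}.
\]
Combining the two terms gives $-\frac{y}{2(1+y)}-\frac{y}{2}=-\frac{y(y+2)}{2(1+y)}$, which confirms the identity $\partial h/\partial q=-r(y)$.

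In case (b) the admissible range of $q$ for fixed $t<0$ is $q>t^2$, so monotonicity is claimed on that half-line. There $y$ stays in $(-1,0)$, and the argument applies throughout.
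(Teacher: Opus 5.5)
Your proof is correct. Parts 2 and 3 match the paper's argument. In Part 2 you use the same auxiliary function with derivative $x^2/(1+x)$. In Part 3 you take the same $q$-derivative of the logarithm, which reduces to $\log(1+y)-y(y+2)/\{2(1+y)\}$ with $y=t/\sqrt{q}$.

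The genuine difference is Part 1. The paper argues geometrically. For $x>0$, $\log(1+x)=\int_1^{1+x}\mathrm{d}t/t$, and $x(x+2)/\{2(1+x)\}$ is the area of the trapezoid under the chord of the convex function $1/t$, so the trapezoid area is larger. The case $-1<x<0$ is then deduced from the case $x>0$ via the substitution $x\mapsto -x/(1+x)$, under which both $\log(1+x)$ and $x(x+2)/\{2(1+x)\}$ change sign.

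You instead differentiate $r$ and obtain $r'(x)=x^2/\{2(1+x)^2\}$. This settles both signs at once and makes Parts 1 and 2 structurally identical. It also yields strict inequalities directly; the paper's display for the negative case writes $\le$, even though strict convexity gives $<$.

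The paper's route gives an interpretation: the trapezoid rule overestimates the integral of a convex function. Its reflection step also shows that \eqref{em1} and \eqref{em2} are really a single statement. Your route is shorter and entirely mechanical. Your remark that in (b) the monotonicity holds on the half-line $q>t^2$ for fixed $t<0$ usefully clarifies the paper's $q$-dependent quantifier ``for fixed $-\sqrt{q}<t<0$''.
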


\begin{proof}
Part \ref{bunshi.1}: Let $x > 0$. 
The area enclosed by the curve $f(t) = 1/t$, the $t$-axis, and the vertical lines $t = 1$ and $t = x+1$ is given by $\log(1+x)$.
For the trapezoid with vertices at 
\begin{equation}
\{ (1,f(1)), \ (1,0), \ (1+x, 0), \ (1+x,f(1+x))\}, 
\end{equation}
the area is $x(x+2)/\{2(1+x)\}$.
Since $f(t)$ is convex, inequality \eqref{em1} follows.

Let $-1 < x < 0$. Then
\begin{equation}
-\log(1+x) = \log\left(1+\frac{-x}{1+x}\right)
\le \frac{\{-x/(1+x)\}\{-x/(1+x)+2\}}{2\{-x/(1+x)+1\}} = -\frac{x(x+2)}{2(x+1)},
\end{equation}
where the inequality follows from \eqref{em1}. Inequality \eqref{em2} then follows.

\medskip

Part \ref{bunbo.1}: Let $f(x) = \log(1+x) - \{x - (1/2)x^2\}$. Then
\begin{equation}
 f'(x) = \frac{1}{1+x} - 1 + x = \frac{1-(x+1)+x(x+1)}{x+1} = \frac{x^2}{1+x}.
\end{equation}
Since $f(0) = 0$ and $f'(x) > 0$ for $x > -1$, inequalities \eqref{em3} and \eqref{em4} follow.

\medskip

Part \ref{bunbo.2}: 
We have
\begin{align}
 \frac{\rd}{\rd q} \log\{(1+t/\sqrt{q})^{q}e^{-t\sqrt{q}}\} &= \log(1+t/\sqrt{q})
+ q\frac{-(1/2)q^{-3/2}t}{1+t/\sqrt{q}}
- \frac{1}{2}q^{-1/2}t \\
&= -\frac{t/\sqrt{q}(2+t/\sqrt{q})}{2(1+t/\sqrt{q})}
+ \log(1+t/\sqrt{q}).
\end{align}
By \eqref{em1} and \eqref{em2} of Part \ref{bunshi.1}, we have
\begin{equation}
 \frac{\rd}{\rd q} \log\{(1+t/\sqrt{q})^{q}e^{-t\sqrt{q}}\}
\begin{cases}
\le 0 &\text{if } t > 0,\\ 
\ge 0 &\text{if } -\sqrt{q} < t < 0,
\end{cases}
\end{equation}
which completes the proof of Part \ref{bunbo.2}.
\end{proof}

\end{document}